\documentclass[11pt]{amsart}
\usepackage{amssymb,amsmath,amsthm}
\usepackage{amsfonts}
\usepackage{enumerate}
\usepackage{dsfont}
\usepackage{cite}
\usepackage[colorlinks, citecolor=red]{hyperref}
\usepackage{mathrsfs}
\usepackage{epsfig}
\usepackage{lscape}
\usepackage{subfigure}
\usepackage{epstopdf}
\usepackage{caption}
\usepackage{algorithm}
\usepackage{algpseudocode}
\usepackage{multirow}
\usepackage{geometry}
\usepackage{paralist}
\usepackage{enumerate}
\usepackage{url}
\usepackage{graphicx,cite}
\usepackage{longtable}
\usepackage{tikz}

\textheight 8.0in
\textwidth 6.00in
\topmargin -0.25in
\oddsidemargin 0.25in
\evensidemargin 0.25in
\parskip 1.0ex

\newtheorem{definition}{Definition}[section]
\newtheorem{corollary}[definition]{Corollary}

\newtheorem{theorem}[definition]{Theorem}

\newtheorem{remark}[definition]{Remark}

\date{}

\begin{document}
\baselineskip 18pt
\bibliographystyle{plain}
\title[RKAS for inconsistent systems]
{Randomized Kaczmarz method with adaptive stepsizes for inconsistent linear systems}

\author{Yun Zeng}
\address{School of Mathematical Sciences, Beihang University, Beijing, 100191, China. }
\email{zengyun@buaa.edu.cn}

\author{Deren Han}
\address{LMIB of the Ministry of Education, School of Mathematical Sciences, Beihang University, Beijing, 100191, China. }
\email{handr@buaa.edu.cn}

\author{Yansheng Su}
\address{School of Mathematical Sciences, Beihang University, Beijing, 100191, China. }
\email{suyansheng@buaa.edu.cn}

\author{Jiaxin Xie}
\address{LMIB of the Ministry of Education, School of Mathematical Sciences, Beihang University, Beijing, 100191, China. }
\email{xiejx@buaa.edu.cn}

\begin{abstract}
We investigate the randomized Kaczmarz method that adaptively updates the stepsize using readily available information  for solving  inconsistent linear systems.
A novel geometric interpretation is provided which shows that the proposed method  can be viewed as an orthogonal projection method in some sense.
We prove that this method converges linearly in expectation to the unique minimum Euclidean norm least-squares solution of the linear system, and provide a tight upper bound for the convergence of the proposed method. Numerical experiments are also given to illustrate the theoretical results.
\end{abstract}

\maketitle

\let\thefootnote\relax\footnotetext{Key words: system of linear equations, inconsistency, Kaczmarz, adaptive stepsize, minimum Euclidean norm least-squares solution}

\let\thefootnote\relax\footnotetext{Mathematics subject classification (2020): 65F10, 65F20, 90C25, 15A06, 68W20}

\section{Introduction}

Solving systems of linear equations is a fundamental problem in scientific computing  and engineering. It comes up in many real-world applications such as signal processing \cite{Byr04},
optimal control \cite{Pat17}, machine learning \cite{Cha08},  and partial differential equations \cite{Ols14}.
The \emph{Kaczmarz} method \cite{Kac37}, also known as \emph{algebraic reconstruction technique} (ART) \cite{herman1993algebraic,gordon1970algebraic}, is a classic yet effective  row-action iteration solver for solving the large-scale linear system of equations
\begin{equation}
\label{main-prob}
Ax=b, \ A\in\mathbb{R}^{m\times n}, \ b\in\mathbb{R}^m.
\end{equation}
At each step of the original Kaczmarz method, a row of the system is sampled and the previous iterate is orthogonally projected onto the hyperplane defined by that row.

In the literature, there are empirical evidences that using the rows of the matrix $A$  in a random order rather than a deterministic order can often accelerate the convergence of the Kaczmarz method \cite{herman1993algebraic,natterer2001mathematics,feichtinger1992new}.
In the seminal paper \cite{Str09}, Strohmer and Vershynin studied the \emph{randomized  Kaczmarz} (RK) method and proved its linear convergence in expectation provided that the  linear system \eqref{main-prob}
is \emph{consistent}. Subsequently, there is a large amount of work on the development of the Kaczmarz-type methods including accelerated randomized Kaczmarz methods \cite{liu2016accelerated,han2022pseudoinverse,loizou2020momentum}, block Kaczmarz methods \cite{Nec19,needell2014paved,moorman2021randomized,Gow15}, greedy randomized Kaczmarz methods \cite{Bai18Gre,Gow19}, randomized sparse Kaczmarz methods \cite{schopfer2019linear,chen2021regularized}, etc. Nevertheless, all of these methods will not converge if the linear system \eqref{main-prob} is not consistent.
Indeed,  Needell \cite{needell2010randomized} showed that RK applied to \emph{inconsistent} linear systems converges only to within a radius (\emph{convergence horizon}) of the least-squares solution (see Theorem \ref{thm-inc}); see also \cite{bai2021greedy} for some further comments.

It is well-known that  the so-called relaxation parameters or stepsizes $\lambda_1,\ldots,\lambda_k$ are important for the Kaczmarz method in practice. The original Kaczmarz method with decreasing stepsizes for solving \emph{inconsistent} systems has been investigated in \cite{censor1983strong,hanke1990acceleration}.
It has been shown that with the stepsizes being nearly zero and appropriate initial point, the Kaczmarz method converges inside the convergence horizon to the minimum Euclidean norm least-squares solution.
However, its convergence rate is difficult to obtain. Hence, for the RK method, a natural and interesting question is that is it possible that with carefully designed stepsizes the  RK method is convergent for solving inconsistent systems? Furthermore, can the convergence rate of the proposed method be obtained easily?


Actually,  the \emph{randomized extended Kaczmarz} (REK) method \cite{Zou12,Du19} has already provided a positive answer to the above questions. Section \ref{section-geo} will provide more detailed comments on this topic.
Furthermore, there is enormous of work on the developments and extensions of the REK method, including the block or deterministic variants of REK \cite{wu2021semiconvergence,needell2013two,DS2021,Du20Ran,wu2022two,wu2022extended,popa1998extensions,popa1999characterization,bai2019partially}, \emph{greedy randomized augmented Kaczmarz} (GRAK) method \cite{bai2021greedy}, \emph{randomized extended Gauss-Seidel} (REGS) method \cite{Du19,ma2015convergence}, etc.
We note that those methods make use of both rows and columns of $A$ at each step (see \eqref{rek}) and work for general linear systems (consistent or inconsistent, full-rank or rank-deficient).
Another randomized method that can be used to solve inconsistent systems with full column-rank coefficient matrix is the \emph{randomized coordinate descent} (RCD) method \cite{Lev10}, we refer to \cite{bai2021convergence} for more discussions about the RCD method.


In this paper, we further investigate the RK method with adaptive stepsizes for solving inconsistent systems and provide an alternative strategy to answer the above questions.
Our proposed geometric interpretation demonstrates that the method can be viewed as an orthogonal projection method in some sense.
By utilizing this interpretation, we show that our strategy is effective in simplifying the analysis  and endows the proposed method with a linear convergence rate.
Additionally, we conduct a comprehensive comparison between the proposed method and the REK method, including their geometric interpretation, theoretical analysis, and numerical behavior.


 The remainder of the paper is organized as follows.
 After introducing some preliminaries in Section 2, we present and analyze the RK method with adaptive stepsizes in Section 3.
In Section 4, we perform some numerical experiments to show
the effectiveness of the proposed method. Finally, we conclude the paper in Section 5.

\section{Preliminaries}
\subsection{Notations}
Throughout the paper, for any random variables $\xi$ and $\zeta$, we use $\mathbb{E}[\xi]$ and $\mathbb{E}[\xi\lvert \zeta]$ to denote the expectation of $\xi$ and the conditional expectation of $\xi$ given $\zeta$. For an integer $m\geq 1$, let $[m]:=\{1,\ldots,m\}$.
Given $T\subset[m]$, the cardinality
of the set $T$ is denoted by $\mid T\mid$.
For any vector $x\in\mathbb{R}^n$, we use $x_i,x^\top$, and $\|x\|_2$ to denote the $i$-th entry, the transpose and the Euclidean norm of $x$, respectively.
For any matrix $A\in\mathbb{R}^{m\times n}$, we use $A_{i,:}, A_{:,j},A^\top,A^\dagger,\|A\|_2,\|A\|_F$, $\mbox{Range}(A)$,  and $\text{Null}(A)$ to denote the $i$-th row, the $j$-th column, the transpose, the Moore-Penrose pseudoinverse, the spectral norm, the Frobenius norm, the column space, and the null space of $A$, respectively.
For any $x,y\in\mathbb{R}^n$, the Hadamard product of $x$ and $y$ is defined to be the entrywise product $(x\circ y)_i=x_iy_i$.
The nonzero singular values of a matrix $A$ are $\sigma_1(A)\geq\sigma_2(A)\geq\ldots\geq\sigma_{r}(A):=\sigma_{\min}(A)>0$, where $r$ is the rank of $A$
and $\sigma_{\min}(A)$ denotes the smallest nonzero singular values of $A$.  We see that $\|A\|_2=\sigma_{1}(A)$ and $\|A\|_F=\sqrt{\sum\limits_{i=1}^r \sigma_i(A)^2}$.

\subsection{The pseudoinverse solution}
In this paper, we are interested in the  pseudoinverse solution $A^\dagger b$ of the linear system \eqref{main-prob}. Here we would like to make clear what $A^\dagger b $ represents in different cases of linear systems \cite{golub2013matrix,ben2003generalized,Du20Ran}. Table \ref{table11} summarizes the results.

\begin{table}
\setlength{\tabcolsep}{2pt}
\caption{The pseudoinverse solution $A^{\dagger} b$ of $Ax=b$.}
\label{table11}
\centering
\begin{tabular}{c|c|c}
\hline

\hline
$A x=b$ & $\operatorname{rank}(A)$ & $A^{\dagger} b$ \\
\hline consistent & $=n$ & unique solution \\
consistent & $<n$ & unique minimum Euclidean norm solution \\
inconsistent & $=n$ & unique least-squares (LS) solution \\
inconsistent & $<n$ & unique minimum Euclidean norm LS solution \\
\hline

\hline
\end{tabular}
\end{table}

\subsection{The RK method}
The RK method for solving the linear system \eqref{main-prob} begins with an arbitrary vector $x^0$, and in the $k$-th iteration iterates by
\begin{equation}\label{RK-method}
x^{k+1}=x^k-\lambda\frac{ A_{i_k,:}x^k-b_{i_k}}{\|A_{i_k,:}\|^2_2}A_{i_k,:}^\top,
\end{equation}
where the index $i_k$ is i.i.d. selected from $[m]$ and $\lambda\in(0,2)$ is the stepsize. When $\lambda=1$, it reduces to the classical RK method. In the seminal paper \cite{Str09}, Strohmer and Vershynin proved the first linear convergence rate of the RK method for consistent systems. Later, Needell \cite{needell2010randomized,needell2014stochasticMP} studied the RK method for inconsistent cases. The result is precisely restated below.
\begin{theorem}[\cite{needell2014stochasticMP}, Corollary 5.1]
\label{thm-inc}
Starting from any initial vector $x^0\in\text{Range}(A^\top)$, the expected error  of the RK method \eqref{RK-method} in the $k$-th iteration satisfies
\begin{equation}\label{conv-RK}
\mathbb{E}\left[\left\|x^k-A^\dagger b\right\|^2_2\right]\leq\left(1-2\lambda(1-\lambda)\frac{\sigma^2_{\min}(A)}{\|A\|^2_F}\right)^k\left\|x^0-A^\dagger b\right\|^2_2+
\frac{\lambda a^2_{\max}\|e\|^2_2}{(1-\lambda)a^2_{\min}\sigma^2_{\min}(A)},
\end{equation}
where the index $i$ is selected with probability $\frac{\|A_{i,:}\|^2_2}{\|A\|^2_F}$,
 $\lambda\in(0,1)$, $e=AA^\dagger b-b$, $a^2_{\min}=\min\limits_{i\in[m]}\|A_{i,:}\|^2_2$, and $a^2_{\max}=\max\limits_{i\in[m]}\|A_{i,:}\|^2_2$.
\end{theorem}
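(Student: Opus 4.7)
The plan is to establish a noisy linear recursion $\mathbb{E}[\|x^{k+1} - x^\star\|_2^2 \mid x^k] \le q\,\|x^k - x^\star\|_2^2 + C$ with $q = 1 - 2\lambda(1-\lambda)\sigma_{\min}^2(A)/\|A\|_F^2$, and then iterate it. Set $x^\star := A^\dagger b$ and $e := Ax^\star - b$; since $x^\star$ satisfies the normal equations, $A^\top e = 0$. Using the identity $A_{i_k,:} x^k - b_{i_k} = A_{i_k,:}(x^k - x^\star) + e_{i_k}$, the update \eqref{RK-method} can be rewritten as
\begin{equation*}
x^{k+1} - x^\star = (x^k - x^\star) - \lambda\,\frac{A_{i_k,:}(x^k - x^\star) + e_{i_k}}{\|A_{i_k,:}\|_2^2}\,A_{i_k,:}^\top,
\end{equation*}
and expanding the squared Euclidean norm (writing $v_k := A_{i_k,:}(x^k - x^\star)$) produces a quadratic term $-\lambda(2-\lambda)\,v_k^2/\|A_{i_k,:}\|_2^2$, a cross term $-2\lambda(1-\lambda)\,v_k e_{i_k}/\|A_{i_k,:}\|_2^2$, and a pure-noise term $\lambda^2\, e_{i_k}^2/\|A_{i_k,:}\|_2^2$.

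Next I would decouple the cross term via Young's inequality $2|v_k e_{i_k}| \le \eta\, v_k^2 + e_{i_k}^2/\eta$, tuning the free parameter $\eta > 0$ so that, after absorbing the $v_k^2$ piece into the quadratic term, the effective coefficient on $v_k^2$ becomes exactly $-2\lambda(1-\lambda)$, matching the advertised contraction factor (this is what forces the restriction $\lambda \in (0,1)$). Taking conditional expectation under the row-norm sampling rule then gives the clean identity $\mathbb{E}[v_k^2/\|A_{i_k,:}\|_2^2 \mid x^k] = \|A(x^k - x^\star)\|_2^2/\|A\|_F^2$, while the remaining noise expectation $\mathbb{E}[e_{i_k}^2/\|A_{i_k,:}\|_2^2 \mid x^k]$, handled via the worst-case bounds $\|A_{i_k,:}\|_2^2 \in [a_{\min}^2, a_{\max}^2]$ on the denominator and the sampling weights, produces the $a_{\max}^2/a_{\min}^2$ ratio in the stated horizon. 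To lower-bound $\|A(x^k - x^\star)\|_2^2$ by $\sigma_{\min}^2(A)\,\|x^k - x^\star\|_2^2$ I need the invariant $x^k - x^\star \in \mathrm{Range}(A^\top)$; this follows by induction from the hypothesis $x^0 \in \mathrm{Range}(A^\top)$, since $x^\star = A^\dagger b \in \mathrm{Range}(A^\top)$ and each RK step appends a multiple of $A_{i_k,:}^\top$, which already lies in $\mathrm{Range}(A^\top)$.

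Finally, I would apply the tower property, iterate $k$ times, and bound the cumulative noise by a geometric series $\sum_{j=0}^{k-1} q^j \le 1/(1-q) = \|A\|_F^2/(2\lambda(1-\lambda)\sigma_{\min}^2(A))$, so that the factor of $\|A\|_F^2$ in the per-step noise cancels against the denominator $1-q$, leaving the horizon in the form stated in \eqref{conv-RK}. The main obstacle I expect is not conceptual but algebraic bookkeeping: selecting the Young parameter $\eta$ together with the row-norm bounds so that the noise constant collapses to exactly $\lambda a_{\max}^2 /((1-\lambda) a_{\min}^2 \sigma_{\min}^2(A))$ and not to one of the several superficially different but essentially equivalent forms that the algebra naturally produces; in particular one must resist the temptation to exploit $A^\top e = 0$ to kill the cross term in expectation, since doing so yields a stronger but differently-shaped bound that no longer matches \eqref{conv-RK}.
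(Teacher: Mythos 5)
First, a framing note: the paper does not prove Theorem \ref{thm-inc} at all --- it is quoted from \cite{needell2014stochasticMP} as a known result --- so there is no in-paper argument to compare against, and your proposal has to stand on its own. It does not: the gap sits exactly at the step you flag in your final parenthesis, and your advice there is backwards. Your expansion of $\|x^{k+1}-A^\dagger b\|_2^2$ is correct, and your coefficient matching does force the Young parameter $\eta=\lambda/(1-\lambda)$. But then the noise contributed by Young's inequality is $\lambda(1-\lambda)/\eta=(1-\lambda)^2$ times $e_{i_k}^2/\|A_{i_k,:}\|_2^2$, so the total per-step noise constant is $\bigl((1-\lambda)^2+\lambda^2\bigr)\,\mathbb{E}\bigl[e_{i_k}^2/\|A_{i_k,:}\|_2^2\mid x^k\bigr]=\bigl((1-\lambda)^2+\lambda^2\bigr)\|e\|_2^2/\|A\|_F^2$ (this conditional expectation is \emph{exact} under row-norm sampling, so no $a_{\max}^2/a_{\min}^2$ arises there either). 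Dividing by $1-q=2\lambda(1-\lambda)\sigma_{\min}^2(A)/\|A\|_F^2$ gives a horizon that behaves like $\|e\|_2^2/\bigl(2\lambda\sigma_{\min}^2(A)\bigr)$ as $\lambda\to0$, which diverges, whereas the horizon in \eqref{conv-RK} is $O(\lambda)$ and vanishes. So your route yields a bound strictly weaker than the statement for small $\lambda$ and cannot be massaged into \eqref{conv-RK}; it destroys precisely the rate-versus-horizon tradeoff that the theorem is quoted for.

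The fix is to do what you resolved not to do: kill the cross term in expectation. Since $A^\top e=0$, one has
\begin{equation*}
\mathbb{E}\Bigl[\tfrac{v_k e_{i_k}}{\|A_{i_k,:}\|_2^2}\,\Big|\,x^k\Bigr]=\tfrac{1}{\|A\|_F^2}\langle A(x^k-A^\dagger b),e\rangle=0,
\end{equation*}
which gives $\mathbb{E}[\|x^{k+1}-A^\dagger b\|_2^2\mid x^k]\le\bigl(1-\lambda(2-\lambda)\sigma_{\min}^2(A)/\|A\|_F^2\bigr)\|x^k-A^\dagger b\|_2^2+\lambda^2\|e\|_2^2/\|A\|_F^2$. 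Iterating this recursion and using $\lambda(2-\lambda)\ge2\lambda(1-\lambda)$ together with $\lambda/(2-\lambda)\le\lambda a_{\max}^2/\bigl((1-\lambda)a_{\min}^2\bigr)$ shows the right-hand side is dominated by that of \eqref{conv-RK}, so the theorem follows. (The $a_{\max}^2/a_{\min}^2$ factor in the cited form is an artifact of the general weighted-SGD framework of \cite{needell2014stochasticMP}, not of bounding the noise denominator pointwise.) The remaining ingredients of your outline --- the invariant $x^k-A^\dagger b\in\mathrm{Range}(A^\top)$, the bound $\|A(x^k-A^\dagger b)\|_2^2\ge\sigma_{\min}^2(A)\|x^k-A^\dagger b\|_2^2$, the tower property, and the geometric series --- are all fine.
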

It can be seen that for arbitrary $\lambda$,
the above result implies a tradeoff between a smaller convergence horizon and a slower
convergence. In this paper, we study the RK method with adaptive stepsizes $\lambda_k$ to eliminate the last term in \eqref{conv-RK} (see Corollary \ref{corol-1}).

\section{Adaptive stepsizes for RK }
\label{section-RrDR}

In this section, we  introduce \emph{RK with adaptive stepsizes} (RKAS) for solving the linear system \eqref{main-prob}.
The method is formally described in Algorithm \ref{rkas}.

\begin{algorithm}[htpb]
\caption{RK with adaptive stepsizes (RKAS) \label{rkas}}
\begin{algorithmic}
\Require
$A\in \mathbb{R}^{m\times n}$, $b\in \mathbb{R}^m$, $k=0$ and initial points $x^0=0, r^0=Ax^0-b=-b$.
\begin{enumerate}
\item[1:] Select $i_{k}\in[m]$ with probability $\mbox{Pr}(i_k=i)=\frac{\|A_{i,:}\|^2_2}{\|A\|_{F}^2}$.
\item[2:] Compute
$$
\alpha_k=\frac{\langle AA_{i_k,:}^\top,r^k\rangle}{\|AA_{i_k,:}^\top\|^2_2}.
$$
\item[3:] Update
\begin{align*}
x^{k+1}&=x^k-\alpha_kA_{i_k,:}^\top,
\\
r^{k+1}&=r^k-\alpha_k A A_{i_k,:}^\top.
\end{align*}
\item[4:] If the stopping rule is satisfied, stop and go to output. Otherwise, set $k=k+1$ and return to Step $1$.
\end{enumerate}

\Ensure
  The approximate solution.
\end{algorithmic}
\end{algorithm}

Note that the most expensive computational cost in the $k$-th iteration of Algorithm \ref{rkas} is to compute $A A_{i_k,:}^\top$. Let $B:=AA^\top$, then $B_{:,i_k}=A A_{i_k,:}^\top$. Thus, if it is possible to store $B=AA^\top$ at the initialization, Algorithm \ref{rkas} could be faster in practice. In fact, this strategy is also adopted by the greedy randomized Kaczmarz method \cite{bai2021greedy,Bai18Gre} and the weighted randomized Kaczmarz method \cite{Ste20Wei}.

\subsection{A geometric interpretation}
We  present an intuitive geometric explanation of Algorithm \ref{rkas} in this subsection.
Consider the following least-squares problem
$$\min\limits_{x\in\mathbb{R}^n} f(x):=\frac{1}{2}\|Ax-b\|^2_2.$$
Since
\begin{align*}
f(x)&=\frac{1}{2}x^\top A^\top A x-x^\top A^\top b+\frac{1}{2}\|b\|_2^2\\
&=\frac{1}{2}x^\top A^\top A x-x^\top A^\top A A^\dagger b+\frac{1}{2}\|b\|_2^2
\\
&=\frac{1}{2}\|Ax-AA^\dagger b\|^2_2-\frac{1}{2}\|AA^\dagger b\|^2_2+\frac{1}{2}\|b\|_2^2,
\end{align*}
where the second equality follows from the fact that $A^\top A A^\dagger b=A^\top b$.
This implies that the least-squares problem can be equivalently reformulated as
\begin{equation}\label{LS-prob}
\min\limits_{x\in\mathbb{R}^n}\frac{1}{2}\|Ax-AA^\dagger b\|^2_2.
\end{equation}
When using RK \eqref{RK-method} to solve the least-squares problem \eqref{LS-prob}, in the $k$-th iteration, we may expect the distance between $Ax^{k+1}$ and $AA^\dagger b$ to be as small as possible. This leads to the following optimization problem:
\begin{equation}\label{opt-prob}
{\boxed{
x^{k+1}=\arg\min\limits_{x\in\mathbb{R}^n} \|Ax-AA^\dagger b\|^2_2 \ \ \text{subject to} \ \ x=x^k-\lambda\frac{A_{i_k,:}x^k-b_{i_k}}{\|A_{i_k,:}\|^2_2}A_{i_k,:}^\top, \ \lambda\in\mathbb{R}.
}}\end{equation}
Note that $r^{k+1}$ in Step $3$ is actually obtained  by  an incremental method
$$
r^{k+1}=r^k-\alpha_k A A_{i_k,:}^\top=Ax^{k}-b-\alpha_k A A_{i_k,:}^\top=Ax^{k+1}-b.
$$
Using the fact that $A^\top A A^\dagger b=A^\top b$ and letting $\gamma_k=\frac{A_{i_k,:}x^k-b_{i_k}}{\|A_{i_k,:}\|^2_2}$, then the minimizer of \eqref{opt-prob} is achieved when
$$\lambda_k^*=\frac{\langle AA_{i_k,:}^\top,Ax^k-AA^\dagger b\rangle}{\|AA_{i_k,:}^\top\|^2_2\gamma_k}=\frac{\langle AA_{i_k,:}^\top,Ax^k-b\rangle}{\|AA_{i_k,:}^\top\|^2_2\gamma_k}=\frac{\langle AA_{i_k,:}^\top,r^k\rangle}{\|AA_{i_k,:}^\top\|^2_2\gamma_k}.$$
Hence
$$
x^{k+1}=x^k-\lambda^*_k\gamma_kA_{i_k,:}=x^k-\frac{\langle AA_{i_k,:}^\top,r^k\rangle}{\|AA_{i_k,:}^\top\|^2_2}A_{i_k,:}^\top=x^k-\alpha_kA_{i_k,:}^\top,
$$
which is exactly the iteration in Step 3 of Algorithm \ref{rkas}.
It follows from \eqref{opt-prob} that $x^{k+1}$ obtained by Algorithm \ref{rkas} satisfies that $Ax^{k+1}$ is the orthogonal projection of $AA^\dag b$ onto $Ax^k+\text{Span}\{AA_{i_k,:}^\top\}$.
 The geometric interpretation of Algorithm \ref{rkas} is presented in Figure \ref{figueRKAS}.

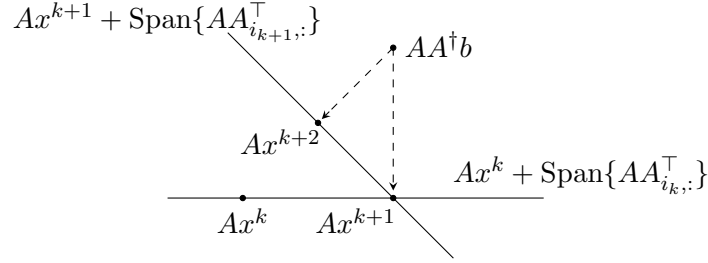
\begin{figure}[hptb]
	\centering
	\begin{tikzpicture}
		\draw (-2,0) -- (3,0);
		\draw (-1.2,2.2) -- (1.8,-0.8);
		
		\filldraw (1,2) circle [radius=1pt]
		(1,0) circle [radius=1pt]
        (-1,0) circle [radius=1pt]
        (0,1) circle [radius=1pt];
		
		\draw (3.5,0.3) node {$Ax^{k}+\mbox{Span}\{AA_{i_k,:}^\top\}$};
        \draw (-2,2.34) node {$Ax^{k+1}+\mbox{Span}\{AA_{i_{k+1},:}^\top\}$};
        \draw (1.6,2.05) node {$AA^\dagger b$};
        \draw (0.5,-0.3) node {$Ax^{k+1}$}
        (-1,-0.3) node {$Ax^k$}
		(-0.5,0.75) node {$Ax^{k+2}$};
		
		\draw [dashed,-stealth] (1,2) -- (1,0.1);
        \draw [dashed] (1,2) -- (0.1,1.1);
        \draw [dashed,-stealth](0.15,1.15) -- (0.05,1.05);
	\end{tikzpicture}
	\caption{A geometric interpretation of Algorithm \ref{rkas}. The next iterate $x^{k+1}$ arises such that $Ax^{k+1}$ is the projection of $AA^\dagger b$ onto $Ax^{k}+\mbox{Span}\{AA_{i_k,:}^\top\}$. }
	\label{figueRKAS}
\end{figure}

\subsection{Convergence analysis}
We now state our convergence results. The following result is about the convergence for  $\mathbb{E}[\|Ax^k-AA^\dagger b\|^2_2]$.
\begin{theorem}
\label{thm-1}
For any given linear system $Ax=b$, let $\{x^k\}_{k=0}^{\infty}$ be the iteration sequence generated by Algorithm \ref{rkas} with $x^0=0$. Then
$$
\mathbb{E}\left[\left\|Ax^k-AA^\dagger b\right\|^2_2\right]\leq \left(1-\frac{\sigma_{\min}^4(A)}{\|A\|_2^2\|A\|_F^2}\right)^k\left\|Ax^0-AA^\dagger b\right\|^2_2.
$$
\end{theorem}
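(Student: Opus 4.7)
The plan is to exploit the geometric interpretation derived in the text: since $Ax^{k+1}$ is the orthogonal projection of $AA^\dagger b$ onto the affine line $Ax^k+\mathrm{Span}\{AA_{i_k,:}^\top\}$, Pythagoras gives
$$\|Ax^{k+1}-AA^\dagger b\|_2^2 = \|Ax^k-AA^\dagger b\|_2^2 - \|Ax^{k+1}-Ax^k\|_2^2.$$
Writing $e^k:=Ax^k-AA^\dagger b$ and using the definition of $\alpha_k$, the subtracted term is $\alpha_k^2\|AA_{i_k,:}^\top\|_2^2=\langle AA_{i_k,:}^\top,r^k\rangle^2/\|AA_{i_k,:}^\top\|_2^2$. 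A short calculation using $A^\top(b-AA^\dagger b)=0$ shows that $\langle AA_{i_k,:}^\top,r^k\rangle=\langle AA_{i_k,:}^\top,e^k\rangle$, so the recursion becomes a clean per-step identity in $e^k$.

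Next I would take conditional expectation given $x^k$. With the sampling probability $\|A_{i,:}\|_2^2/\|A\|_F^2$, the expected decrement equals $\sum_i \|A_{i,:}\|_2^2\langle AA_{i,:}^\top,e^k\rangle^2/(\|A\|_F^2\|AA_{i,:}^\top\|_2^2)$. I would bound $\|AA_{i,:}^\top\|_2^2\le\|A\|_2^2\|A_{i,:}\|_2^2$ in the denominator, which cancels the row-norm weight and collapses the sum to
$$\frac{1}{\|A\|_2^2\|A\|_F^2}\sum_{i=1}^m\langle AA_{i,:}^\top,e^k\rangle^2=\frac{\|AA^\top e^k\|_2^2}{\|A\|_2^2\|A\|_F^2}.$$

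The remaining task is a lower bound of the form $\|AA^\top e^k\|_2^2\ge\sigma_{\min}^4(A)\|e^k\|_2^2$. This is where the only real care is needed: the bound requires $e^k\in\mathrm{Range}(A)$, so one must verify this invariant. It follows because $Ax^k\in\mathrm{Range}(A)$ trivially and $AA^\dagger b=P_{\mathrm{Range}(A)}b\in\mathrm{Range}(A)$; then applying $\sigma_{\min}(A)$ twice (first to $A^\top$ restricted to $\mathrm{Range}(A)$, then to $A$ restricted to $\mathrm{Range}(A^\top)$, using that $A^\top e^k\in\mathrm{Range}(A^\top)$) yields the desired inequality. Combining the pieces gives
$$\mathbb{E}\bigl[\|e^{k+1}\|_2^2\,\bigl|\,x^k\bigr]\le\Bigl(1-\frac{\sigma_{\min}^4(A)}{\|A\|_2^2\|A\|_F^2}\Bigr)\|e^k\|_2^2,$$
and the theorem follows by the tower property and induction on $k$.

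The main obstacle, though a mild one, is the last step: verifying the Range$(A)$ invariance of $e^k$ and applying the two-sided singular-value estimate correctly, since a careless application of $\sigma_{\min}$ would lose a factor and spoil the $\sigma_{\min}^4$ rate. Everything else is routine algebra enabled by the orthogonal-projection viewpoint.
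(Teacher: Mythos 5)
Your proposal is correct and follows essentially the same route as the paper's proof: the Pythagorean identity from the orthogonal-projection property, the substitution $\langle AA_{i_k,:}^\top,r^k\rangle=\langle AA_{i_k,:}^\top,Ax^k-AA^\dagger b\rangle$ via $A^\top b=A^\top AA^\dagger b$, the bound $\|AA_{i,:}^\top\|_2\leq\|A\|_2\|A_{i,:}\|_2$ to collapse the conditional expectation to $\|AA^\top e^k\|_2^2/(\|A\|_2^2\|A\|_F^2)$, and the singular-value lower bound justified by $e^k\in\mathrm{Range}(A)=\mathrm{Range}(AA^\top)$. Your explicit verification of the range invariant matches the paper's appeal to $A(x^k-A^\dagger b)\in\mathrm{Range}(AA^\top)$, so no further changes are needed.
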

\begin{proof}
Since $Ax^{k+1}-AA^\dagger b$ and $AA_{i_k,:}^\top$ are orthogonal, we have
\begin{align*}
\left\|Ax^{k+1}-AA^\dagger b\right\|^2_2&=\left\|Ax^k-AA^\dagger b\right\|^2_2-\left\|Ax^{k+1}-Ax^{k}\right\|^2_2\\
&=\left\|Ax^k-AA^\dagger b\right\|^2_2-\frac{\langle AA_{i_k,:}^\top,r^k\rangle^2}{\|AA_{i_k,:}^\top\|^2_2}\\
&=\left\|Ax^k-AA^\dagger b\right\|^2_2-\frac{\langle AA_{i_k,:}^\top,Ax^k-b\rangle^2}{\|AA_{i_k,:}^\top\|^2_2}\\
&=\left\|Ax^k-AA^\dagger b\right\|^2_2-\frac{\langle AA_{i_k,:}^\top,Ax^k-AA^\dagger b\rangle^2}{\|AA_{i_k,:}^\top\|^2_2},
\end{align*}
where the last equality follows from $A^\top b=A^\top AA^\dagger b$.
Hence
\begin{equation}\label{xie-0212}
\begin{aligned}
\mathbb{E}\left[\left\|Ax^{k+1}-AA^\dagger b\right\|^2_2 \bigg | x^k \right]&=\left\|Ax^k-AA^\dagger b\right\|^2_2-\sum\limits_{i=1}^m\frac{\|A_{i,:}\|^2_2}{\|A\|^2_F}\frac{\langle AA_{i,:}^\top,Ax_k-AA^\dagger b\rangle^2}{\|AA_{i,:}^\top\|^2_2}\\
&\leq \left\|Ax^k-AA^\dagger b\right\|^2_2-\sum\limits_{i=1}^m\frac{\|A_{i,:}\|^2_2}{\|A\|^2_F}\frac{\langle AA_{i,:}^\top,Ax^k-AA^\dagger b\rangle^2}{\|A\|^2_2\|A_{i,:}\|^2_2}
\\
&= \left\|Ax^k-AA^\dagger b\right\|^2_2-\sum\limits_{i=1}^m\frac{\langle AA_{i,:}^\top,Ax^k-AA^\dagger b\rangle^2}{\|A\|^2_2\|A\|^2_F}
\\
&=\left\|Ax^k-AA^\dagger b\right\|^2_2-\frac{\|AA^\top(Ax^k-AA^\dagger b)\|_2^2}{\|A\|^2_2\|A\|^2_F}\\
&\leq \left(1-\frac{\sigma^4_{\min}(A)}{\|A\|_2^2\|A\|^2_F}\right)\left\|Ax^k-AA^\dagger b\right\|^2_2,
\end{aligned}
\end{equation}
where the first inequality follows from $\|AA_{i,:}^\top\|_2\leq \|A\|_2\|A_{i,:}\|_2$ and the last inequality follows from the fact that $A(x^k-A^\dagger b)\in\text{Range}(AA^\top)$.
Taking expectation over the entire history we have
	$$
\mathbb{E}\left[\left\|Ax^{k+1}-AA^\dagger b\right\|^2_2 \right]\leq \left(1-\frac{\sigma^4_{\min}(A)}{\|A\|_2^2\|A\|^2_F}\right)\mathbb{E}\left[\left\|Ax^k-AA^\dagger b\right\|^2_2\right].
	$$
	By induction on the iteration index $k$, we can obtain the desired result.
\end{proof}

By Theorem \ref{thm-1}, we can obtain the following  linear convergence for the expected norm of the error.

\begin{corollary}\label{corol-1}
For any given linear system $Ax=b$, let $\{x^k\}_{k=0}^{\infty}$ be the iteration sequence generated by Algorithm \ref{rkas} with $x^0=0$. Then
$$
\mathbb{E}\left[\left\|x^k-A^\dagger b\right\|^2_2\right]\leq \sigma^{-2}_{\min}(A)\left(1-\frac{\sigma_{\min}^4(A)}{\|A\|_2^2\|A\|_F^2}\right)^k\left\|Ax^0-AA^\dagger b\right\|^2_2.
$$
\end{corollary}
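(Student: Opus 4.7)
The plan is to reduce Corollary \ref{corol-1} to Theorem \ref{thm-1} in essentially one line, using two standard facts: an invariance statement that keeps every iterate in $\text{Range}(A^\top)$, and a lower bound for $A$ restricted to this subspace. Theorem \ref{thm-1} controls the image-space error $\|Ax^k - AA^\dagger b\|_2$, and what I need is to pull this bound back to the domain-space error $\|x^k - A^\dagger b\|_2$.

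First I would verify by induction that $x^k - A^\dagger b \in \text{Range}(A^\top)$ for every $k$. The base case is immediate, since $x^0 = 0 \in \text{Range}(A^\top)$ and $A^\dagger b \in \text{Range}(A^\top)$ by the standard characterization of the Moore-Penrose pseudoinverse. For the inductive step, the update in Algorithm \ref{rkas} adds a multiple of $A_{i_k,:}^\top$ to $x^k$, and each row-transpose $A_{i_k,:}^\top$ lies in $\text{Range}(A^\top)$, so the difference $x^{k+1} - A^\dagger b$ stays in this subspace.

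The second ingredient is the inequality $\|Av\|_2 \geq \sigma_{\min}(A)\, \|v\|_2$ for every $v \in \text{Range}(A^\top)$, which is immediate from the singular value decomposition: on $\text{Range}(A^\top) = \text{Null}(A)^{\perp}$ the operator $A^\top A$ is bounded below by $\sigma_{\min}^2(A)\, I$. Applied to $v = x^k - A^\dagger b$, this yields the deterministic pointwise estimate
\[
\left\|x^k - A^\dagger b\right\|_2^2 \;\leq\; \sigma_{\min}^{-2}(A)\, \left\|Ax^k - AA^\dagger b\right\|_2^2.
\]
Taking expectations on both sides and substituting the bound supplied by Theorem \ref{thm-1} produces the claimed rate. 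I do not foresee a real obstacle here, since the argument is essentially a one-step postprocessing of Theorem \ref{thm-1}; the only subtlety worth naming is that the invariance argument relies on $x^0 = 0$ (or more generally $x^0 \in \text{Range}(A^\top)$), which is exactly what the hypotheses provide.
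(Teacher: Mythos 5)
Your proposal is correct and follows exactly the paper's own argument: establish $x^k - A^\dagger b \in \mathrm{Range}(A^\top)$ from the initialization $x^0=0$ and the row-space updates, apply the singular-value lower bound $\|Av\|_2 \geq \sigma_{\min}(A)\|v\|_2$ on that subspace, and then invoke Theorem \ref{thm-1}. No differences worth noting.
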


\begin{proof}
According to the iteration of Algorithm \ref{rkas} and $x^0=0$, we know that $x^k\in\text{Range}(A^\top)$. As $A^\dagger b\in\text{Range}(A^\top)$, we have $x^k-A^\dagger b\in\text{Range}(A^\top)$. This implies
\begin{equation}\label{xie-0212-1}
\left\|Ax^k-AA^\dagger b\right\|^2_2\geq \sigma^2_{\min}(A)\left\|x^k-A^\dagger b\right\|^2_2.
\end{equation}
Then by Theorem \ref{thm-1}, we arrive at this corollary.
\end{proof}

\begin{remark}
If $\sigma_1(A) = \sigma_{\min}(A)$, that is, all nonzero singular values of $A$ are equal, then the inequalities in \eqref{xie-0212} and \eqref{xie-0212-1} become equalities. Consequently, the upper bounds in Theorem \ref{thm-1} and Corollary \ref{corol-1} are also equalities, indicating that the upper bounds in Theorem \ref{thm-1} and Corollary \ref{corol-1} are tight.
\end{remark}

\begin{remark}
We note that one can choose any $x^0\in\mathbb{R}^n$ as the initial vector. In this case, we can prove that
$$
\mathbb{E}\left[\left\|x^k-x^0_{*}\right\|^2_2\right]\leq \sigma^{-2}_{\min}(A)\left(1-\frac{\sigma_{\min}^4(A)}{\|A\|_2^2\|A\|_F^2}\right)^k\left\|Ax^0-Ax^0_{*}\right\|^2_2,
$$
where $x^0_*:=A^\dagger b+(I-A^\dagger A)x^0$. We refer the reader to \cite{DS2021,HSXDR2022,han2022pseudoinverse} for more details.
\end{remark}

\begin{remark}\label{remark2}
For the RK method \eqref{RK-method}, from \eqref{conv-RK} and  with an analysis analogous to \cite[Corollary 2.2]{needell2014stochasticMP}, for any desired $\varepsilon$, using a stepsize
 $$
 \lambda=\frac{\varepsilon\sigma^2_{\min}(A)a^2_{\min}}{2\varepsilon\sigma^2_{\min}(A)\alpha^2_{\min}+2 \|e\|_2^2 a^2_{\max}},
 $$
 one has that after
 \begin{equation}\label{iter-rk}
 k=2\log\left(2\varepsilon_0/\varepsilon\right)\left(\frac{\|A\|^2_F}{\sigma^2_{\min}(A)}+\frac{\|A\|^2_F \|e\|^2_2 a^2_{\max}}{\varepsilon\sigma^4_{\min}(A)a^2_{\min}}\right)
 \end{equation}
 iterations, $\mathbb{E}[\|x^k-A^\dagger b\|^2_2]\leq \varepsilon$, where $\varepsilon_0=\|x^0-A^\dagger b\|^2_2$. For Algorithm \ref{rkas}, according to corollary \ref{corol-1}, we have that after
 \begin{equation}\label{iter-rkas}
 k=\log\left(\frac{\varepsilon_1}{\varepsilon\sigma^2_{\min}(A)}\right)\frac{\|A\|^2_F\|A\|^2_2}{\sigma^4_{\min}(A)}
\end{equation}
 iterations,  $\mathbb{E}[\|x^k-A^\dagger b\|^2_2]\leq \varepsilon$, where $\varepsilon_1=\|Ax^0-AA^\dagger b\|^2_2$. From \eqref{iter-rk} and \eqref{iter-rkas}, we know that the RKAS method shall use less number of iterations than that of the RK method to obtain an iterative solution with the accuracy $\varepsilon<O(1/\|A\|^2_2)$.
\end{remark}

\subsection{The relationship between REK and RKAS}
\label{section-geo}

Recently, the randomized extended Kaczmarz (REK) method \cite{Zou12,Du19} has attracted much attention for solving inconsistent systems. The method generates two sequences $\{\tilde{z}^{k}\}_{k=0}^\infty$ and $\{\tilde{x}^{k}\}_{k=0}^\infty$ via
\begin{equation}\label{rek}
	\begin{array}{ll}
		\tilde{z}^{k+1}&=\tilde{z}^{k}-\frac{A^{\top}_{:,j_k}\tilde{z}^{k}}{\|A_{:,j_k}\|^2_2}A_{:,j_k},
		\\
		\tilde{x}^{k+1}&=\tilde{x}^{k}-\frac{ A_{i_k,:} \tilde{x}^k-b_{i_k}+(\tilde{z}^{k+1})_{i_k}}{\|A_{i_k,:}\|^2_2}A_{i_k,:}^\top,
	\end{array}
\end{equation}
where the column $A_{:,j_k}$ is chosen with probability $\frac{\|A_{:,j_k}\|^2_2}{\|A\|^2_F}$ and the row $A_{i_k,:}$ is chosen with probability $\frac{\|A_{i_k,:}\|^2_2}{\|A\|^2_F}$, see \cite{Du19}.

Firstly, let us recall the geometric interpretation of the REK method discussed in previous works \cite{Zou12,Du19}.
We  define the hyperplanes  as follows
$$H_{i_k}=\{x| A_{i_k, :} x=(AA^{\dagger}b)_{i_k}\} \ \text{and} \ H_{i_k,\tilde{z}^{k+1}}=\{x|A_{i_k, :} x=(b-\tilde{z}^{k+1})_{i_k}\}.$$
It can be seen that the pseudoinverse  solution $A^\dagger b$ belongs to  $H_{i_k}$, and $\tilde{x}^{k+1}$ is the orthogonal projection of $\tilde{x}^k$ onto $H_{i_k,\tilde{z}^{k+1}}$.
We use $\tilde{x}^{k+1}_*$ to denote the orthogonal projection of $\tilde{x}^k$ onto $H_{i_k}$. In fact, $\tilde{x}^{k+1}$ can now be regarded as an approximation of $\tilde{x}^{k+1}_*$.  The geometric interpretation of REK is presented in Figure \ref{figureREK}.


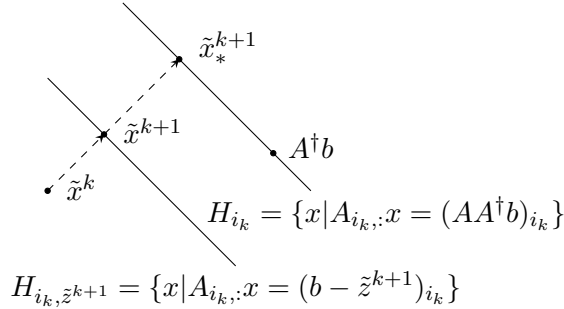
\begin{figure}[hptb]
 	\centering
	\begin{tikzpicture}
	\draw (1.5,3.5)--(4,1);
	\draw (0.5,2.5)--(3,0);
	
	\filldraw (2.25,2.75) circle [radius=1pt]
                      (1.25,1.75) circle [radius=1pt]
                      (0.5,1) circle [radius=1pt]
                      (3.5,1.5) circle [radius=1pt];
            \draw (2.9,2.9) node {$\tilde{x}^{k+1}_*$};
            \draw (1.9,1.8) node {$\tilde{x}^{k+1}$};
            \draw (0.95,1.05) node {$\tilde{x}^k$};
            \draw (4,1.6) node {$A^{\dag}b$};
            \draw (5,0.7) node {$H_{i_k}=\{x| A_{i_k, :} x=(AA^{\dag}b)_{i_k}\}$};
            \draw (3,-0.3) node {$H_{i_k,\tilde{z}^{k+1}}=\{x|A_{i_k, :} x=(b-\tilde{z}^{k+1})_{i_k}\}$};
            \draw [dashed,-stealth] (0.5,1) -- (1.25,1.75);
            \draw [dashed,-stealth] (1.25,1.75)--(2.25,2.75);

         \end{tikzpicture}
         \caption{The geometric interpretation of REK \cite{Zou12,Du19}. The next iterate $\tilde{x}^{k+1}$ can be regarded as an approximation of $\tilde{x}^{k+1}_*$.}	
         \label{figureREK}
\end{figure}

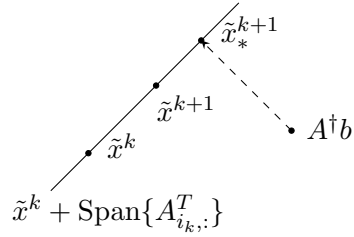
\begin{figure}[hptb]
 	\centering
	\begin{tikzpicture}
	\draw (0,0)--(2.5,2.5);
	
	\filldraw (2,2) circle [radius=1pt]
                      (1.4,1.4) circle [radius=1pt]
                      (0.5,0.5) circle [radius=1pt]
                      (3.2,0.8) circle [radius=1pt];
            \draw (2.65,2.1) node {$\tilde{x}_*^{k+1}$};
            \draw (1.8,1.1) node {$\tilde{x}^{k+1}$};
            \draw (0.95,0.6) node {$\tilde{x}^k$};
            \draw (3.7,0.85) node {$A^{\dag}b$};
            \draw (0.9,-0.3) node {$\tilde{x}^k+\mbox{Span}\{A_{i_k,:}^T\}$};
            \draw [dashed,-stealth] (3.2,0.8)--(2,2);

         \end{tikzpicture}
         \caption{The geometric interpretation of REK from a different perspective.}	
         \label{figureREKnew}
\end{figure}

To understand the  relationship between REK and RKAS, we shall examine the geometric interpretation of REK from a different perspective. Indeed, $\tilde{x}_*^{k+1}$ can be also regarded as the orthogonal projection of $A^{\dag}b$ onto the space $\tilde{x}^k+\mbox{Span}\{A_{i_k,:}^T\}$, see Figure \ref{figureREKnew}. At each step,  REK seeks a point $\tilde{x}^{k+1}$ that belongs to $\tilde{x}^k+\mbox{Span}\{A_{i_k,:}^T\}$ and approximately minimizes  $\|\tilde{x}^{k+1}-A^\dagger b\|^2_2$, since finding the optimal $\tilde{x}_*^{k+1}$ may be difficult in practice.
This means that REK can be regarded as an error-minimizing method to some extent, or an \emph{inexact error-minimizing} method.
For the RKAS method, it follows from \eqref{opt-prob} that at each step,  we find an $x^{k+1}$ belonging to $x^k+\mbox{Span}\{A_{i_k,:}^T\}$ such that  $\left\|Ax^{k+1}-AA^\dagger b\right\|^2_2$ is minimized, which implies that RKAS can be regarded as a \emph{residual-minimizing} method.

It can be observed from \eqref{rek} that the iterates $\tilde{x}^{k}$ obtained by REK can also be interpreted as a variation of RK with adaptive stepsizes, where the sequence $\{\tilde{z}^{k}\}_{k=0}^\infty$ is an auxiliary variable used to update the stepsizes.
 As shown by Du \cite[Theorem 2]{Du19}, the convergence factor for REK is $1-\frac{\sigma^2_{\min}(A)}{\|A\|^2_F}$, which is better than $1-\frac{\sigma^4_{\min}(A)}{\|A\|^2_2\|A\|^2_F}$ established in Corollary \ref{corol-1}.
 However, in our numerical experiments, we find that RKAS is better than REK for handling large-scale sparse  systems.

For the sparse matrix $A\in\mathbb{R}^{m\times n}$, assume that all of its rows and columns are not
equal to zero. Moreover, we assume that its $j$-th column, i.e.  $A_{:, j}(j=1,\ldots,n)$, has $m_j$ nonzero entries.  Let us define
$$
T=\left\{(i, \ell) \mid A_{i, :} \circ A_{\ell, :} \neq 0, 1 \leq i, \ell \leq m \right\}
$$
and for any fixed $i\in[m]$,
$$
T_i=\left\{(i, \ell) \mid A_{i, :} \circ A_{\ell, :} \neq 0, 1 \leq \ell \leq m\right\},
$$
where $\circ$ denotes the Hadamard product. From the definition, we have
$$
T=\cup_{i=1}^m T_i, \ \text{and } \ T_i \cap T_j=\emptyset, \forall i\neq j.
$$
We note that since $A$ is a sparse matrix, the cardinality of $T$, i.e. $\mid T\mid$, can be much smaller than $m^2$, and in practical issues sometimes $O(m)$.
For any $(i,\ell) \in T$, we assume that $A_{i, :} \circ A_{\ell, :}$ has $s_{i,\ell}$ nonzero entries. Therefore, we know that
the $i$-th column of $A$ now has $s_{i,i}$ nonzero entries.

For the REK method, its initialization for computing
$\|A_{i, :}\|_2^2$ $(i =1,\ldots, m)$ and $\|A_{:, j}\|_2^2$ $(j=1, \ldots, n)$ costs
$$
2 \sum\limits_{i=1}^m s_{i, i}+2\sum\limits_{j=1}^n m_j-m-n
$$
flops, and its execution of the $k$-th iterate costs
$$
4s_{i_k, i_k}+4m_{j_k}+2
$$
flops. For the RKAS method, if we store $AA^\top$ at the beginning, then
the initialization of the RKAS method for computing $AA^\top$, $\|AA_{i,:}\|_2^2$, and $\|A_{i, :}\|_2^2$ $(i=1,\ldots,m)$ costs
$$
\sum_{(i, \ell) \in T, i \leq \ell} (2s_{i,\ell}-1)+2 \sum\limits_{i=1}^m s_{i, i}+\frac{3}{2} \mid T\mid-\frac{3m}{2}
$$
flops, and the execution of the $k$-th iterate of the RAKS method costs
$$
2s_{i_k, i_k}+4 \mid T_{i_k}\mid
$$
flops. If we do not store $AA^\top$ at the initialization,
then the  initialization of  the RKAS for computing
$\|A_{i, :}\|_2^2$ $(i =1,\ldots, m)$  costs
$$
2 \sum\limits_{i=1}^m s_{i, i}-m
$$
flops, and the execution of the $k$-th iterate of the RKAS method costs
$$
2s_{i_k, i_k}+5 \mid T_{i_k}\mid+2\sum_{\ell \in T_{i_k}} s_{i_k, \ell}-1
$$
flops.

\section{Numerical experiments}
In this section, we describe some numerical results for the RKAS method for inconsistent systems. We also compare RKAS with REK \cite{Zou12,Du19} on a variety of test problems.
 All methods are implemented in  {\sc Matlab} R2022a for Windows $10$ on a desktop PC with the  Intel(R) Core(TM) i7-10710U CPU @ 1.10GHz  and 16 GB memory.

As in Du et al \cite{Du20Ran}, to construct an inconsistent linear system, we set $b=A x+r$, where $x$ is a vector with entries generated from a standard normal distribution and the residual $r \in \operatorname{Null}\left(A^\top\right)$. Note that one can obtain such a vector $r$ by the {\sc Matlab} function {\tt null}. For RKAS, we set $x^0=0$ and store $AA^\top$ at the initialization, and for REK, we set $z^0=b$ and $x^0=0$. We stop the algorithms if the relative solution error (RSE)
$\frac{\|x^k-A^\dagger b\|^2_2}{\|A^\dagger b\|^2_2}\leq10^{-12}$.
We report the average number of iterations (denoted as Iter) and the average computing time in seconds (denoted as CPU) of RKAS and REK.

\subsection{Synthetic data}
We use the following two types of coefficient matrices.

\begin{itemize}
  \item For given $m, n, r$, and $\kappa>1$, we construct a dense matrix $A$ by $A=U D V^T$, where $U \in \mathbb{R}^{m \times r}, D \in \mathbb{R}^{r \times r}$, and $V \in \mathbb{R}^{n \times r}$. Using {\sc Matlab} colon notation, these matrices are generated by {\tt [U,$\sim$]=qr(randn(m,r),0)}, {\tt [V,$\sim$]=qr(randn(n,r),0)}, and {\tt D=diag(1+($\kappa$-1).*rand(r,1))}. So the condition number of $A$ is upper bounded by $\kappa$.
  \item We construct a random sparse matrix by using the {\sc Matlab} sparse random matrix function {\tt sprandn(m,n,density,rc)}, where {\tt density} is the percentage of nonzero entries and {\tt rc} is the reciprocal of the condition number.
\end{itemize}

Figures \ref{figue1} and \ref{figue2} illustrate our experimental results with a fixed $n$.
In Figure \ref{figue1}, we plot the computing time of the REK and RKAS for inconsistent linear systems with coefficient matrices $A=UDV^\top$, where $m=1000,2000,\ldots,10000,n=100,r=80$, $\kappa(A)=2$ (left) or $\kappa(A)=10$ (right).
It can be observed from Figure \ref{figue1} that REK is more efficient than RKAS for solving the dense problem, and the changing of parameter $\kappa(A)$ affects the performance of RKAS greatly than that of REK.

In Figure \ref{figue2}, we plot the computing time of the REK and RKAS with random sparse matrices $A$, where $m=1000,2000,\ldots,10000,n=100$,$\kappa(A)=2$ (left) or $\kappa(A)=10$ (right), and the sparsity of the coefficient matrices $A$ is $0.1$. Noting that now $\kappa(A)$ is exactly the condition number of  $A$. It can be seen that RKAS performs better than REK.
 It can be also found that the more rows there are than columns, the better RKAS performs than REK. This is due to REK adopting both a row and a column at each step, while RKAS is a row-action method where only a single row is used at each step.
To illustrate this observation more clearly, in Figure \ref{figue3}, we plot the computing times of REK and RKAS with a fixed $m=10000$ and $n=100,200,\ldots,1000$.  It is clear that the performance of RKAS is better than REK when $n$ is small, and  REK is better than RKAS when $n$ is large. 

\begin{figure}[hptb]
	\centering
	\begin{tabular}{cc}
\includegraphics[width=0.33\linewidth]{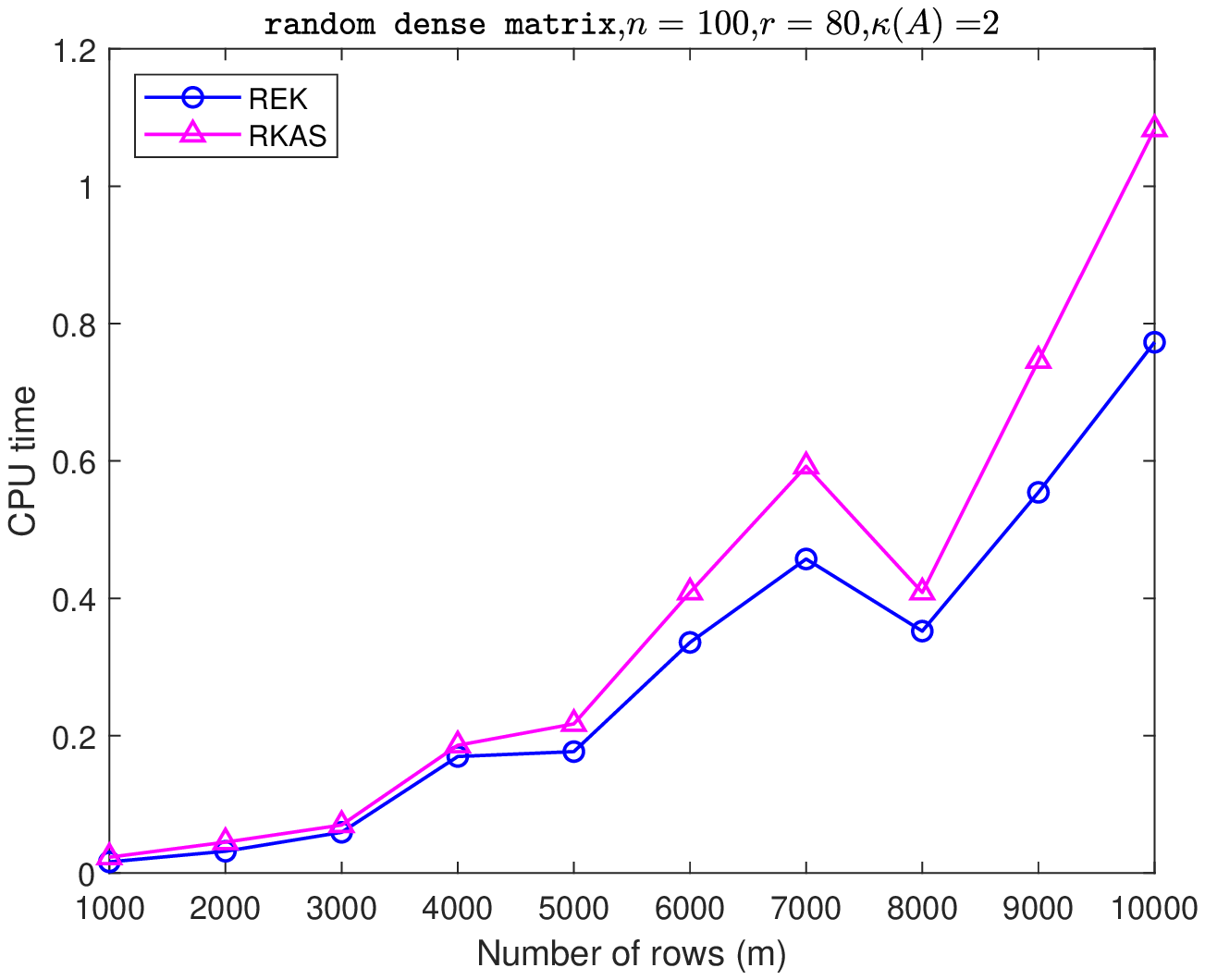}
\includegraphics[width=0.33\linewidth]{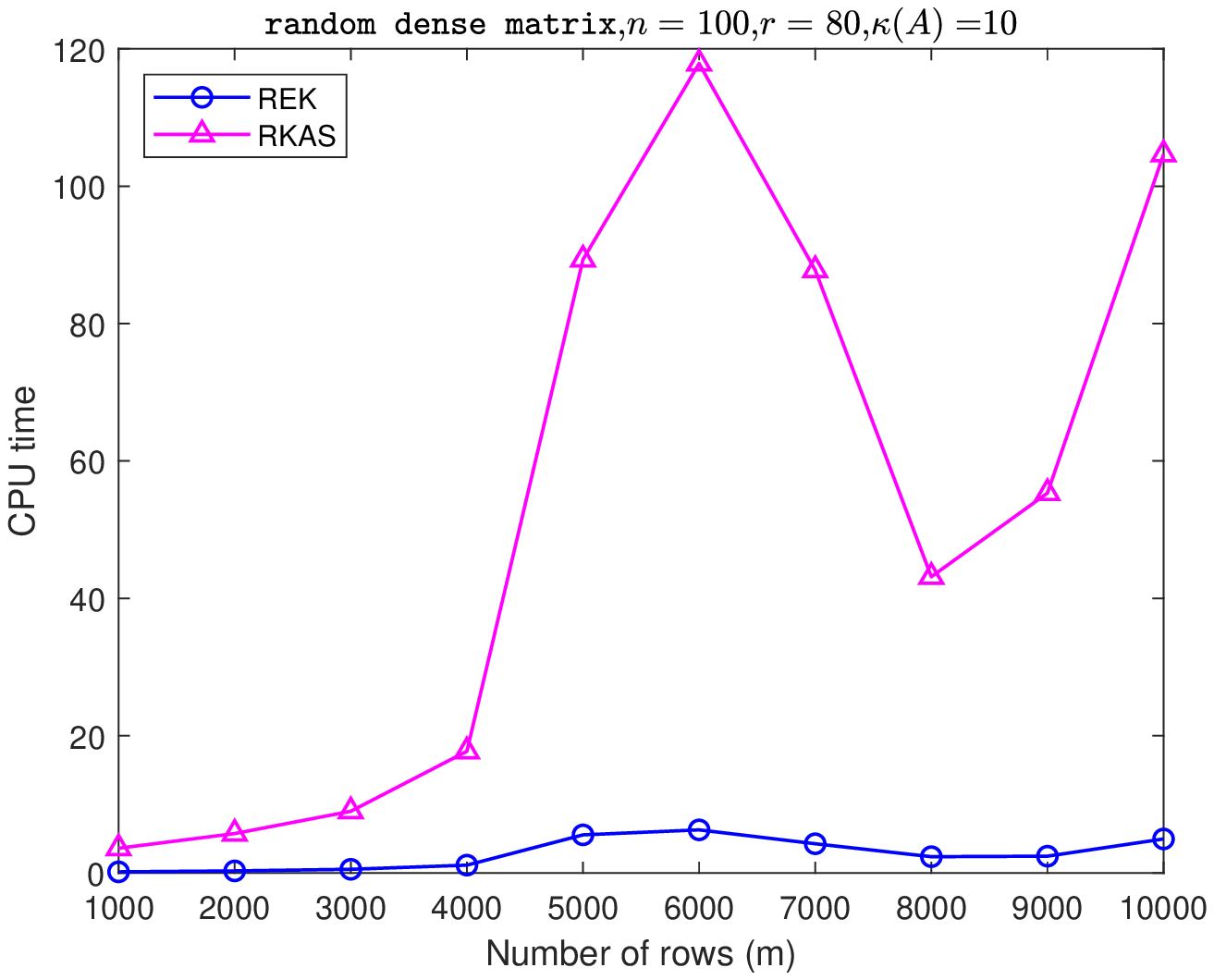}
	\end{tabular}
	\caption{Figures depict the CPU time (in seconds) vs increasing number of rows for the case of the random dense matrix.  The title of each plot indicates the values of $n,r$, and $\kappa$. All plots are averaged over 50 trials. }
	\label{figue1}
\end{figure}

\begin{figure}[hptb]
	\centering
	\begin{tabular}{cc}
\includegraphics[width=0.33\linewidth]{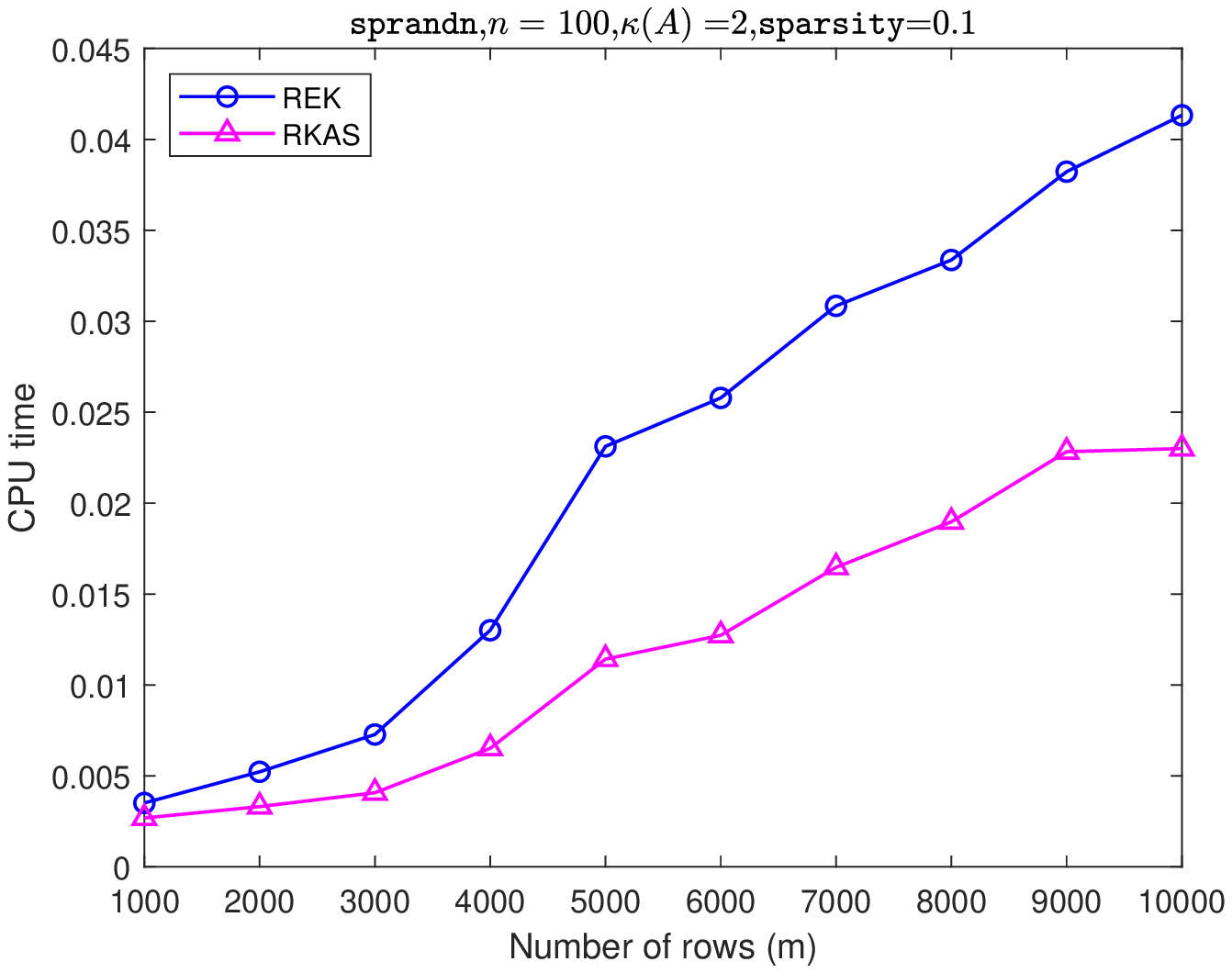}
\includegraphics[width=0.33\linewidth]{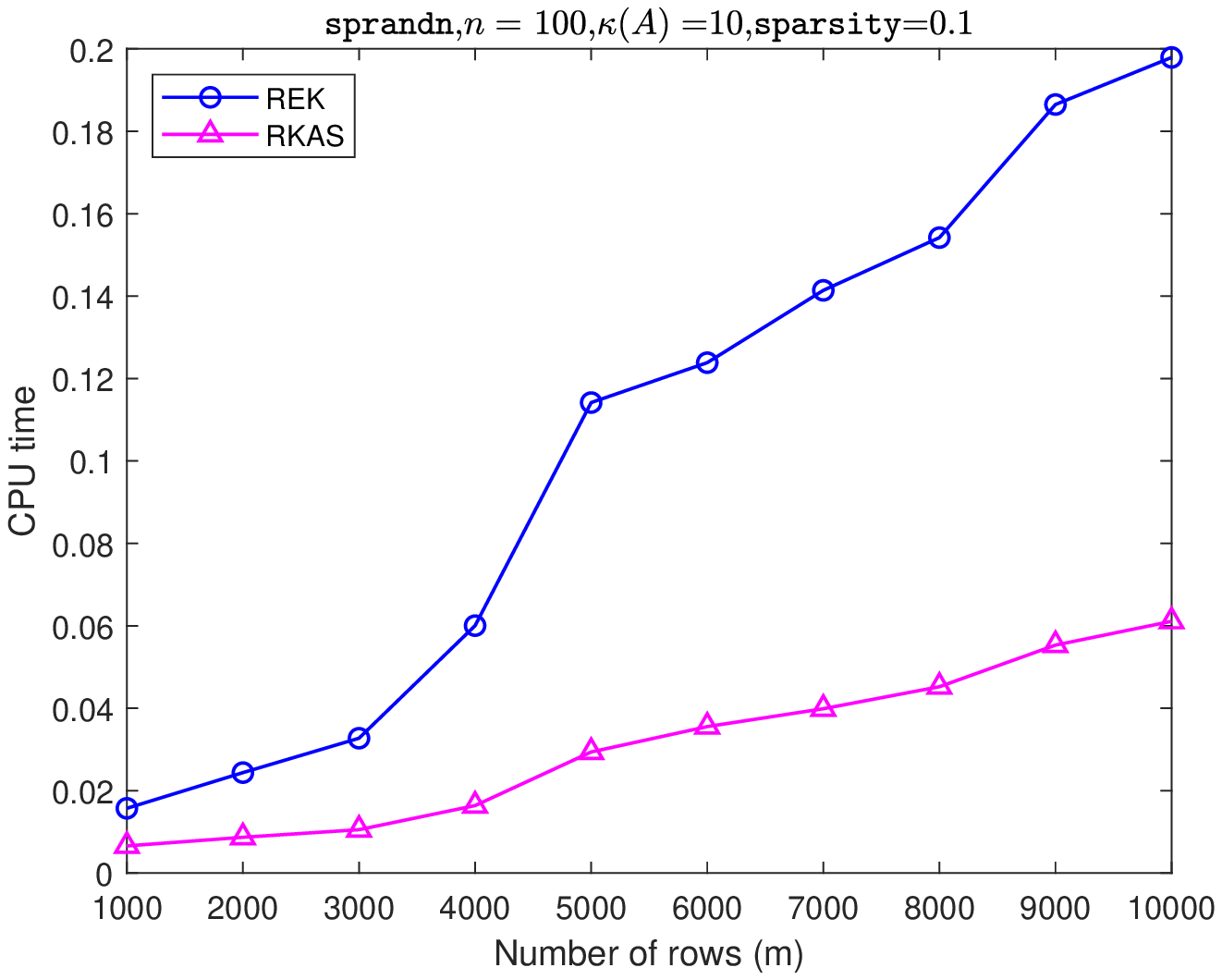}
	\end{tabular}
	\caption{Figures depict the CPU time (in seconds) vs increasing number of rows for the case of the random sparse matrix.  The title of each plot indicates the values of $n$, $\kappa$, and sparsity. All plots are averaged over 50 trials.  }
	\label{figue2}
\end{figure}

\begin{figure}[hptb]
	\centering
	\begin{tabular}{cc}
\includegraphics[width=0.33\linewidth]{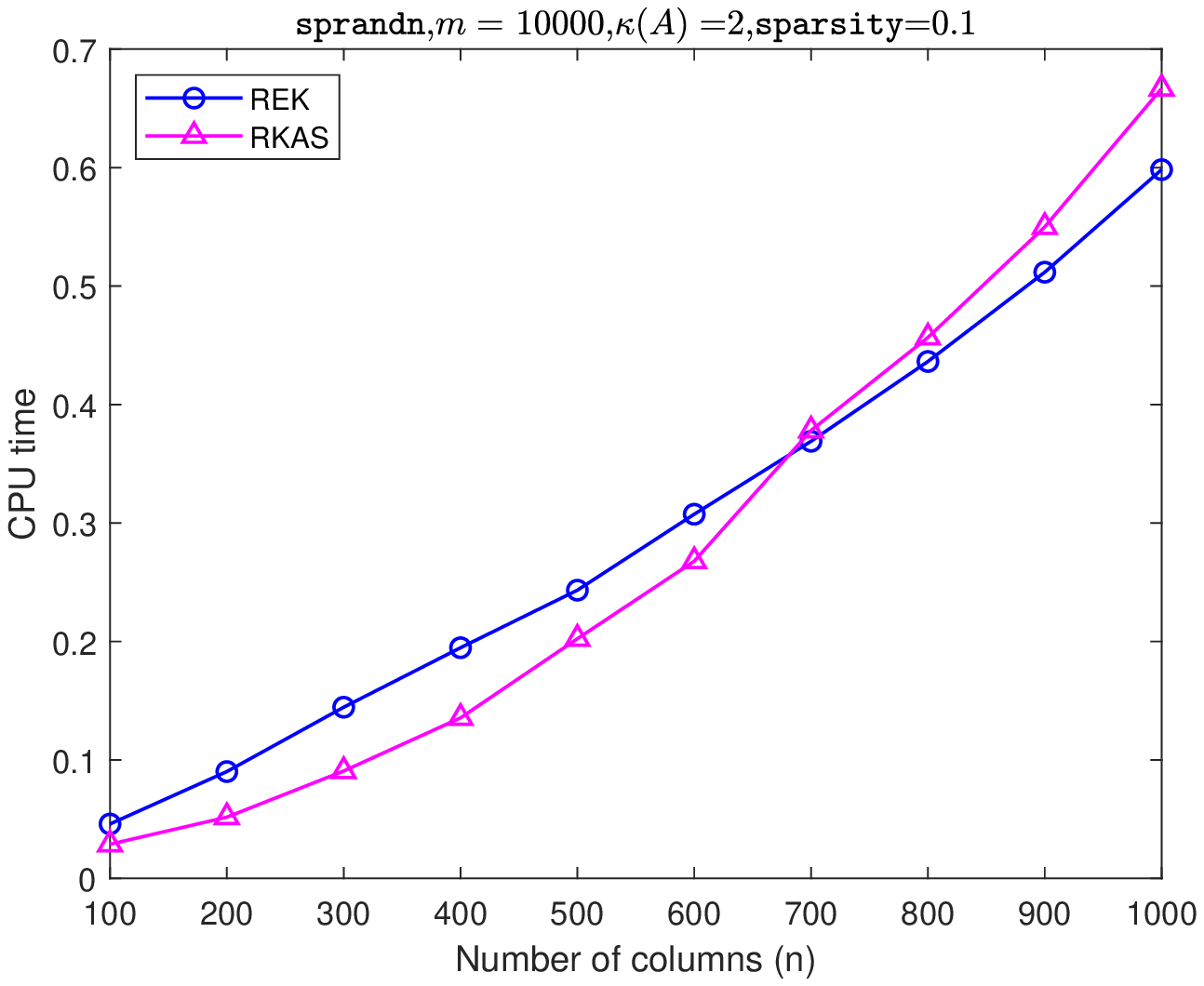}
\includegraphics[width=0.33\linewidth]{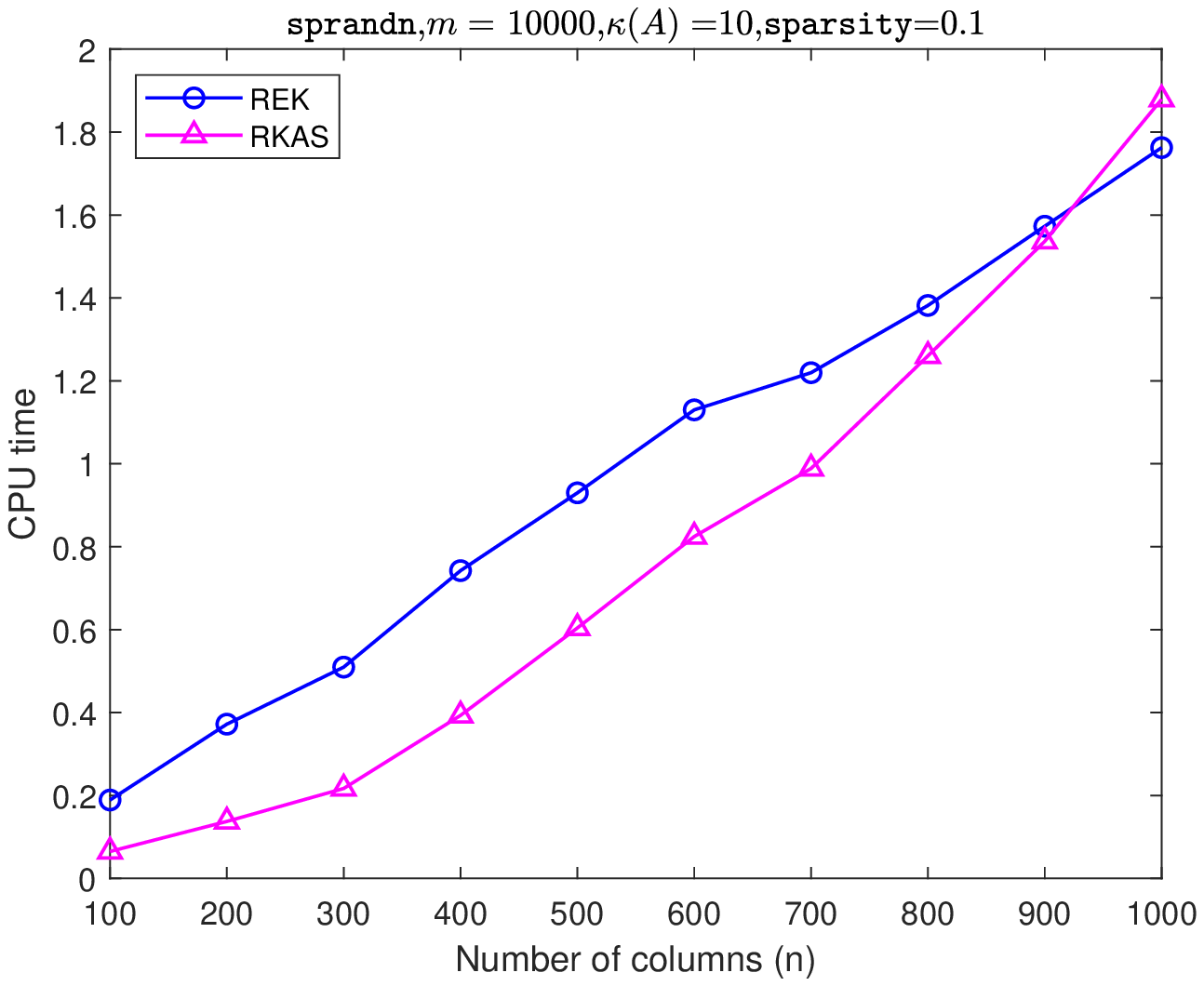}
	\end{tabular}
	\caption{Figures depict the CPU time (in seconds) vs increasing number of columns for the case of the random sparse matrix.  The title of each plot indicates the values of $m$, $\kappa$, and sparsity. All plots are averaged over 50 trials.  }
	\label{figue3}
\end{figure}

\subsection{Real-world data} The real-world data are available via the SuiteSparse Matrix Collection \cite{Kol19}. The five matrices are {\tt nemsafm}, {\tt df2177}, {\tt ch8\_8\_b1}, {\tt bibd\_16\_8}, and {\tt ash958}.
Each dataset consists of a matrix $A\in\mathbb{R}^{m\times n}$ and a vector $b\in\mathbb{R}^m$. In our experiments, we only use the matrices $A$ of the datasets and ignore the vector $b$. In Table \ref{table1}, we report the number of iterations and the computing times for  REK and RKAS. It can be observed that  RKAS is comparable with REK for solving inconsistent linear systems.

\begin{table}
\renewcommand\arraystretch{1.5}
\setlength{\tabcolsep}{2pt}
\caption{ The average (50 trials of each algorithm) Iter and CPU of REK and RKAS for inconsistent linear systems with coefficient matrices from \cite{Kol19}.}
\label{table1}
\centering
{\scriptsize
\begin{tabular}{  |c| c| c| c| c |c |c |c|  }
\hline
\multirow{2}{*}{ Matrix}& \multirow{2}{*}{ $m\times n$ }  &\multirow{2}{*}{rank}& \multirow{2}{*}{$\frac{\sigma_{\max}(A)}{\sigma_{\min}(A)}$}  &\multicolumn{2}{c| }{REK}  &\multicolumn{2}{c| }{RKAS}
\\
\cline{5-8}
& &   &    & Iter & CPU    & Iter & CPU      \\
\hline
 {\tt nemsafm} & $334\times 2348$ &   334  &4.77  &   41308.70  &   {\bf 1.3104} & 120565.48  &   2.3087  \\
\hline
{\tt df2177} & $630\times10358$ &  630 & 2.01  &   20192.62  &   5.1010 & 21480.34  &   {\bf 2.9148}   \\
\hline
{\tt ch8\_8\_b1} & $1568\times64$ &   63  & 3.48e+14  &   1800.96  &   0.0186 &  1686.84  &   {\bf 0.0136}  \\
\hline
{\tt bibd\_16\_8}& $120\times12870$ &  120  & 9.54 &    7859.60  &   {\bf 3.3143} & 151632.30  &  32.4403  \\
\hline
{\tt ash958} & $958\times292$ &  292  &3.20  &     15711.02  &   {\bf 0.1037} & 42197.00  &   0.1924   \\
\hline
\end{tabular}
}
\end{table}

\section{Concluding remarks}

Consider the following the least-squares problem
\begin{equation}\label{quar-f}
\min\limits_{x\in\mathbb{R}^n}f(x):=\frac{1}{2 m}\|A x-b\|_2^2=\frac{1}{m} \sum_{i=1}^m f_i(x),
\end{equation}
where $f_i(x)=\frac{1}{2}\left(A_{i,:}x-b_i\right)^2$.
To state conveniently, we assume that $A \in \mathbb{R}^{m \times n}$ is normalized to $\|A_{i,:}\|^2_2=1$ for each row of $A$.
The RK method \eqref{RK-method} can be seen as \emph{stochastic gradient descent} (SGD) \cite{hardt2016train,robbins1951stochastic,ma2017stochastic} applied to the least-squares problem \eqref{quar-f}. Indeed,  SGD solves \eqref{quar-f} using unbiased estimates for the gradient of the objective function, i.e.  $\nabla f_i(x)$ such that $\mathbb{E}\left[\nabla f_i(x)\right]=\nabla f(x)$. At each iteration, a random unbiased estimate $\nabla f_i(x)$ is drawn and SGD uses the following update formula
\begin{equation}\label{SGD}
x^{k+1}=x^k-\lambda_k \nabla f_i(x^k),
\end{equation}
where $\lambda_k$ is an appropriately chosen stepsize. Noting that if a random row of the matrix $A$ is selected and \eqref{SGD} is computed with $\nabla f_i(x^k)=\left( A_{i,:}x^k-b_i\right)A_{i,:}^\top$, then one can recover the RK method.

It is well-known that SGD suffers from slow convergence as the variance of the gradient estimate  $\nabla f_i(x)$ does not naturally diminish, i.e. $\lim\limits_{k \rightarrow \infty}\mathbb{E}[\|\nabla f_{i_k} (x^k)-\nabla f(x^k)\|^2_2] \neq 0$.
Let $x_*$ be an optimal point of \eqref{quar-f} and consider the variance of its gradient estimate
$$
\sigma^2
=\mathbb{E}[\| \nabla f_i(x_*)-\nabla f(x_*)\|^2_2]
=\mathbb{E}[\| \nabla f_i(x_*)\|^2_2]
=\sum\limits_{i=1}^m \frac{(A_{i,:}x_*  - b_i)^2}{m}
=\frac{\|e\|^2_2}{m},
$$
where $e=Ax_*-b$ is the residual at $x_{*}$.
When the system is consistent, as the iterate approaches $x_*$, the residual gradually drops to zero and thus so does the variance, which ensures the convergence of SGD with a constant stepsize.
When the system is inconsistent, however, $e \neq 0$ and the variance does not decrease to zero. In this case, variance reduction techniques are introduced \cite{nguyen2017sarah,johnson2013accelerating}, otherwise a \textit{decreasing stepsize} is required. Nevertheless, the decreasing stepsize brings about adverse effect on the convergence of SGD, which is sublinear even if the objective function is strongly convex \cite{nemirovski2009robust}.
In this paper, we have shown that RK, i.e. SGD for \eqref{quar-f}, with our adaptive stepsize strategy enjoys a linear rate without any variance reduction procedure.
A natural extension of our results is the design and analysis of adaptive stepsizes for SGD in the case of general convex or strongly convex functions. This should be an interesting and valuable topic that deserves in-depth study in the future.

Finally, we note that a bunch of advanced probability criteria have been investigated in the literature for the RK method, such as the greedy selection rule \cite{Bai18Gre} and  the weighted version \cite{Ste20Wei}. These criteria are convenient to be adapted to the RKAS context for further improvement in efficiency.

\bibliographystyle{abbrv}
\bibliography{main1230}

\end{document}